%
%

\documentclass[12pt,a4paper]{amsart}
\usepackage{enumerate}

\usepackage{amsmath,amssymb,xspace,amsthm}
\newtheorem{theorem}{Theorem}
\newtheorem{remark}[theorem]{Remark}
\newtheorem{lemma}[theorem]{Lemma}
\numberwithin{equation}{section}

\renewcommand{\phi}{\varphi}

\renewcommand{\leq}{\leqslant}
\renewcommand{\geq}{\geqslant}

\begin{document}
\title{Characterization of simple highest weight modules}
\author{Volodymyr Mazorchuk and Kaiming Zhao}
\date{\today}

\begin{abstract}
We prove that for simple complex finite dimensional
Lie algebras, affine Kac-Moody Lie algebras, the
Virasoro algebra and the Heisenberg-Virasoro algebra,
simple highest weight modules are characterized
by the property that all positive root elements
act on these modules locally nilpotently. We
also show that this is not the case for higher rank
Virasoro and for Heisenberg algebras.
\end{abstract}
\maketitle

\vskip 10pt \noindent {\em Keywords:}  Lie algebra;
highest weight module; triangular decomposition;
locally nilpotent action

\vskip 5pt
\noindent
{\em 2000  Math. Subj. Class.:}
17B10, 17B20, 17B65, 17B66, 17B68

\vskip 10pt

\section{Introduction}\label{s0}

Trying to classify all modules over some algebra $A$,
one often faces the following recognition problem: given
some (simple) module $M$ one has to determine whether
$M$ belongs to the class of already known modules.
A common situation is when the module $M$ is not given
explicitly but rather by some general construction 
which allows one to derive easily some rough properties of
$M$ but does not really allow to see subtle properties
of specific elements. It is therefore useful to 
have simple general characterizations for known classes
of $A$-modules. 

If $A$ is the universal enveloping algebra of a Lie
algebra with triangular decompositions, then one of the
most classical families of $A$-modules is formed by the
so-called highest weight modules, see e.g. \cite{MP}
for details and examples. The aim of the present note 
is to prove the following main result which characterizes
simple highest weight modules over certain Lie algebras
with triangular decomposition.

\begin{theorem}\label{thmmain}
Let $\mathfrak{g}$ be one of the following complex Lie
algebras with a fixed triangular decomposition
$\mathfrak{g}=\mathfrak{n}_-\oplus \mathfrak{h}\oplus
\mathfrak{n}_+$ in the sense of \cite{MP}:
\begin{enumerate}[$($a$)$]
\item\label{thmmain.1} a semi-simple finite dimensional
Lie algebra;
\item\label{thmmain.2} an affine Kac-Moody Lie algebra;
\item\label{thmmain.3} the Virasoro Lie algebra; 
\item\label{thmmain.4} the Heisenberg-Virasoro
Lie algebra.
\end{enumerate}
Let $V$ be a $\mathfrak{g}$-module 
(not necessarily weight) on which every 
root element of the algebra $\mathfrak{n}_+$ acts
locally nilpotently. Then  we have the following:
\begin{enumerate}[$($i$)$]
\item\label{thmmain.5} The module $V$ contains a nonzero vector $v$ such that 
$\mathfrak{n}_+\, v=0$.
\item\label{thmmain.6} If $V$ is simple, then $V$ is a highest weight module.
\end{enumerate}
\end{theorem}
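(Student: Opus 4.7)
The plan is to prove (i) first---this is the main content---and derive (ii) from (i) using simplicity of $V$. For (i), the strategy differs sharply depending on whether $\mathfrak{n}_+$ is finite- or infinite-dimensional.

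For case (a), where $\mathfrak{n}_+$ is finite-dimensional nilpotent, I would induct on $\dim \mathfrak{n}_+$ via the centre. The centre $\mathfrak{z} = Z(\mathfrak{n}_+)$ is spanned by root vectors $e_\beta$ for roots $\beta$ maximal in the positive root poset; since each such $e_\beta$ commutes with all of $\mathfrak{n}_+$, the kernel $\ker e_\beta \subseteq V$ is $\mathfrak{n}_+$-invariant. Intersecting these kernels one by one (each intersection remains $\mathfrak{n}_+$-invariant, and local nilpotence of the next $e_{\beta'}$ on it yields a nonzero kernel) produces a nonzero $\mathfrak{n}_+$-invariant subspace $V_1$ on which $\mathfrak{z}$ acts trivially. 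Apply the inductive hypothesis to $V_1$ as a module for $\mathfrak{n}_+/\mathfrak{z}$, a strictly smaller nilpotent Lie algebra.

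For cases (b), (c), (d), where $\mathfrak{n}_+$ is infinite-dimensional, this induction breaks down, and this is the main technical obstacle. The key observation is that $\mathfrak{n}_+$ is finitely generated as a Lie algebra in each of these cases: by the Chevalley generators $\{e_0,\ldots,e_r\}$ for affine Kac--Moody, by $\{L_1,L_2\}$ for Virasoro (since $L_{n+1}=\tfrac{1}{n-1}[L_1,L_n]$ for $n \geq 2$), and by analogous generators for Heisenberg--Virasoro. Hence a vector annihilated by these generators is annihilated by all of $\mathfrak{n}_+$. I would build such a vector iteratively, starting from $v_1 \in \ker e_{i_1}$, which exists by local nilpotence. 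The difficulty is that kernels of distinct generators are not mutually invariant: for Virasoro, if $v \in \ker L_1$ then $L_1 L_2 v = L_3 v$ need not vanish. Exploiting explicit commutator identities, such as $L_1^k L_2 v = k!\, L_{k+2} v$ for $v \in \ker L_1$, one can descend on a complexity measure like $\sup\{n : L_n v \neq 0\}$, provided this quantity is finite. Producing an initial $v$ with finite such support, using only local nilpotence of each individual root element without the aid of a weight decomposition on $V$, is the hardest step and requires case-specific analysis based on the root structure.

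For part (ii), assuming (i), let $0 \neq v \in V$ satisfy $\mathfrak{n}_+ v = 0$ and set $W := V^{\mathfrak{n}_+}$. Since $[\mathfrak{h}, \mathfrak{n}_+] \subseteq \mathfrak{n}_+$, the subspace $W$ is $\mathfrak{h}$-invariant, so it is a module for the commutative algebra $U(\mathfrak{h})$. It suffices to find an $\mathfrak{h}$-eigenvector $w \in W$: then $V = U(\mathfrak{g}) w = U(\mathfrak{n}_-) w$ is a highest weight module with highest weight vector $w$. To this end, let $I := \operatorname{Ann}_{U(\mathfrak{h})}(v)$, a proper ideal of $U(\mathfrak{h})$, and pick a maximal ideal $\mathfrak{m}$ containing $I$. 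Then $\mathfrak{m} v \subseteq W$, and by PBW together with $\mathfrak{n}_+(\mathfrak{m} v) = 0$, the submodule $U(\mathfrak{g}) \cdot \mathfrak{m} v$ reduces to $U(\mathfrak{n}_-) \cdot \mathfrak{m} v$. Simplicity of $V$ forces this submodule to be either $0$ or $V$; the $\mathbb{Z}$-grading of $\mathfrak{g}$ by root height combined with $v \notin \mathfrak{m} v$ (which holds since $1 \notin \mathfrak{m}$) excludes equality with $V$, so $\mathfrak{m} v = 0$. Hence $I = \mathfrak{m}$ is maximal and $v$ is a weight vector.
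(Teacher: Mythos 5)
Your argument for part (ii) has a genuine gap, and it sits exactly where the main difficulty of the theorem lies. To exclude $U(\mathfrak{n}_-)\,\mathfrak{m}v=V$ you invoke ``the $\mathbb{Z}$-grading of $\mathfrak{g}$ by root height combined with $v\notin\mathfrak{m}v$''. But $V$ is not a graded ($\mathbb{Z}\Delta$-graded, i.e.\ weight) module a priori --- that is precisely what is to be proved. Writing $V=M/K$ with $M=U(\mathfrak{g})/U(\mathfrak{g})\mathfrak{n}_+$ the universal Verma module, your grading argument would show $v\notin U(\mathfrak{n}_-)\mathfrak{m}v$ only if $K$ were a homogeneous submodule of $M\cong U(\mathfrak{n}_-)\otimes U(\mathfrak{h})$; if $K$ is not generated by homogeneous elements, an identity of the form $v=\sum_j u_jf_jv$ with $u_j$ of strictly negative height and $f_j\in\mathfrak{m}$ is perfectly possible in $V$, and indeed if $I$ is not maximal then simplicity forces $U(\mathfrak{n}_-)\mathfrak{m}v=V\ni v$. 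So your soft argument proves nothing beyond what it assumes. Establishing homogeneity of $K$ is the actual content of the paper's Section~3: it requires passing to a prime annihilator ideal, a careful analysis of supports and the automorphisms $\Phi_\lambda$ of $U(\mathfrak{h})$, and ultimately the irreducibility of Verma modules at parameters transcendental over the relevant number field (non-vanishing of the Shapovalov determinant). That statement (ii) cannot follow from (i) by a formal argument is also signalled by the fact that the proof must use specific structure of the four listed algebras, whereas your reduction would apply verbatim to any Lie algebra with triangular decomposition.

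Part (i) is also not complete as written. Your centre-based induction for case (a) is fine and essentially equivalent to the paper's codimension-one filtration of $\mathfrak{n}_+$. For the Virasoro case your identity $L_1^kL_2v=k!\,L_{k+2}v$ (for $v\in\ker L_1$) in fact already closes the gap you flag: local nilpotence of $L_1$ applied to the single vector $L_2v$ gives $L_nv=0$ for all $n\gg0$, and then one descends on $\sup\{n:L_nv\neq0\}$ using local nilpotence of each $L_m$; you should say this rather than leave it as ``the hardest step''. For the affine case, however, nothing in your proposal produces a vector with finite support: the paper's argument there is genuinely different, first killing the imaginary root space $\mathfrak{g}_\alpha$, then propagating along the strings $\beta+i\alpha$ for the finitely many ``elementary'' real roots $\beta$ to obtain an ideal of $\mathfrak{n}_+$ of finite codimension annihilating $v$, and only then reducing to the finite-dimensional nilpotent case. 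The Heisenberg--Virasoro case is handled similarly on top of the Virasoro case. These case analyses need to be supplied.
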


Note that the fact that every root element of the 
algebra $\mathfrak{n}_+$ acts locally nilpotently 
on every highest weight module is obvious. The  
proof of Theorem~\ref{thmmain}\eqref{thmmain.5}
is essentially combinatorial and is given in Section~\ref{s1}.
The proof of Theorem~\ref{thmmain}\eqref{thmmain.6} relies on  
irreducibility of generic Verma modules and occupies Section~\ref{s2}.
In Section~\ref{s3} we show that for higher rank Virasoro
and Heisenberg Lie algebras the claim of 
Theorem~\ref{thmmain} is not true and discuss impossibility
of certain natural relaxations of the conditions of 
the Theorem in general.

\section{Proof of Theorem~\ref{thmmain}\eqref{thmmain.5}}\label{s1}

We denote by $\mathbb{N}$ the set of positive integers.
We prove Theorem~\ref{thmmain}\eqref{thmmain.5} using a case-by-case 
analysis of the cases \eqref{thmmain.1}--\eqref{thmmain.4}. 

\subsection{Case of finite dimensional 
Lie algebras}\label{s1.1}

In this subsection, we assume that $\mathfrak{g}$
is as in Theorem~\ref{thmmain}\eqref{thmmain.1}.
In this case $\mathfrak{n}_+$ is finite dimensional
and nilpotent. Hence there is a filtration
\begin{displaymath}
\mathfrak{n}_+= \mathfrak{n}_0\supset
\mathfrak{n}_1\supset\dots\supset
\mathfrak{n}_{\dim \mathfrak{n}_+}=0
\end{displaymath}
such that each $\mathfrak{n}_i$ is an ideal of 
$\mathfrak{n}_{i-1}$ of codimension $1$, stable
with respect to the adjoint action of $\mathfrak{h}$. 
We will use the backward induction on $i$ to show that 
$V$ contains a nonzero element annihilated by 
$\mathfrak{n}_i$ for all $i=0,1,\dots,
\dim \mathfrak{n}_+$. Note that 
$\mathfrak{n}_{\dim \mathfrak{n}_+}$ is 
one-dimensional and generated by a root vector
by our assumption on the filtration. Hence 
$\mathfrak{n}_{\dim \mathfrak{n}_+}$ acts locally 
nilpotently on $V$ giving us the basis of our induction.

To prove the induction step, let $i>0$ and $v\in V$
be such that $v\neq 0$ and $\mathfrak{n}_i\, v=0$.
Let $X\in\mathfrak{n}_{i-1}\setminus \mathfrak{n}_i$ 
be a root element. Then, by our assumptions, 
there exists a nonnegative integer $m\in\mathbb{N}$
such that $w=X^m\, v\neq 0$  and $X^{m+1}\, v=0$.

We claim that $\mathfrak{n}_i w=0$, in fact, we will
prove that $\mathfrak{n}_i X^j\, v=0$ for all 
$j=0,1,\dots,m$ by induction on $j$. The basis  $j=0$ 
of this induction follows from our assumptions. For the
induction step for every $Y\in \mathfrak{n}_i$ we compute:
\begin{displaymath}
Y\cdot X^{j+1}\, v=
Y\cdot X\cdot X^{j}\, v=
X\cdot Y\cdot X^{j}\, v+[Y,X]\cdot X^{j}\, v.
\end{displaymath}
In the latter expression, the first term is zero
directly by induction, and the second term is zero
by induction since $[Y,X]\in \mathfrak{n}_i$ (as
$\mathfrak{n}_i$ is an ideal of $\mathfrak{n}_{i+1}$).
The claim follows.

We note that for the class of weight modules this problem
was considered and solved in \cite{Fe}.

\subsection{Case of affine Kac-Moody 
Lie algebras}\label{s1.2}

In this subsection, $\mathfrak{g}$ is an affine 
Kac-Moody Lie algebra.
Let $\alpha$ be the indivisible positive imaginary root.
Then $\mathfrak{g}_{\alpha}$ is finite dimensional, 
commutative, and acts locally nilpotently on $V$. 
Hence $V$ contains a nonzero $v$ such that 
$\mathfrak{g}_{\alpha}\, v=0$. 

Now let $\beta$ be a positive real root which cannot
be written as a sum of some other positive real root
and a positive imaginary root 
(we call such $\beta$ elementary).
If $\mathfrak{g}_{\beta}\, v=0$, then 
$\mathfrak{g}_{\beta+i\alpha}\, v=0$ for all 
$i\in\mathbb{N}$. If $\mathfrak{g}_{\beta}\, v\neq 0$,
then $\mathfrak{g}_{\alpha}^i\mathfrak{g}_{\beta}\, v=0$
for all $i\gg 0$ by our assumptions. Using 
$\mathfrak{g}_{\alpha}\, v=0$, we obtain
\begin{displaymath}
\mathfrak{g}_{\alpha}\mathfrak{g}_{\beta}\, v=
\mathfrak{g}_{\beta}\mathfrak{g}_{\alpha}\, v+
[\mathfrak{g}_{\alpha},\mathfrak{g}_{\beta}]\, v=
\mathfrak{g}_{\beta+\alpha}\, v
\end{displaymath}
and, similarly, 
$\mathfrak{g}_{\alpha}^i\mathfrak{g}_{\beta}\, v=
\mathfrak{g}_{\beta+i\alpha}\, v$. This implies that
$\mathfrak{g}_{\beta+i\alpha}\, v=0$ for all 
$i\gg 0$ in both cases. Since the number of elementary
positive real roots is finite, we have such a statement
simultaneously and uniformly for all of them.

Note that for any elementary root $\beta$ the
root $-\beta+\alpha$ is also elementary.
Then for every elementary $\beta$
and for all $i\gg 0$ all elements in 
$[\mathfrak{g}_{\beta+i\alpha},
\mathfrak{g}_{-\beta+\alpha +(i-1)\alpha}]$ annihilate $v$.
Such elements generate $\mathfrak{g}_{k\alpha}$
for all $k\gg 0$. The sum of all 
$\mathfrak{g}_{k\alpha}$, $k\gg 0$, and all
$\mathfrak{g}_{\beta+i\alpha}$, $i\gg 0$, for
all elementary roots $\beta$, gives an ideal
$\mathfrak{n}$ of $\mathfrak{n}_+$ of finite
codimension. We have $\mathfrak{n}\, v=0$.
The algebra $\mathfrak{n}_+/\mathfrak{n}$ is finite
dimensional and nilpotent. The proof is now 
completed similarly to the case of finite
dimensional $\mathfrak{g}$, see Subsection~\ref{s1.1}.

We note that for the class of weight modules this result
can be easily deduced from \cite{Fu}.

\subsection{Case of the Virasoro algebra}\label{s1.3}

In this subsection, $\mathfrak{g}$ is the Virasoro
algebra with basis $\{e_i:i\in\mathbb{Z}\}\cup\{c\}$,
where $c$ is central and the rest of the Lie brackets 
is given for $i,j\in\mathbb{Z}$ by
\begin{equation}\label{eq1}
[e_i,e_j]=(j-i)e_{i+j}+\delta_{i,-j}\frac{i^3-i}{12}c.
\end{equation}
The algebras $\mathfrak{n}_{\pm}$ are spanned
(over $\mathbb{C}$) by the elements $e_{\pm i}$,
$i\in\mathbb{N}$, respectively.

As $e_1$ acts on $V$ locally nilpotently, there is
a nonzero $v\in V$ such that $e_1\, v=0$. If
$e_2\,v=0$, then $\mathfrak{n}_+\, v=0$ as
$e_1$ and $e_2$ generate $\mathfrak{n}_+$. Assume
$w=e_2\,v\neq 0$. Then 
\begin{displaymath}
e_1\, w= e_1e_2\,v=e_2e_1\, v+[e_1,e_2]\, v=
e_3\, v.
\end{displaymath}
Similarly one shows that, up to a nonzero scalar,
the element $e_1^i\, w$ coincides with $e_{2+i}\, v$.
As $e_1$ acts on $V$ locally nilpotently,
it follows that there exists a positive integer
$n\geq 2$ such that $e_i\, v=0$ for all $i>n$.

We now show that for every positive integer
$m$ there exists a nonzero $u\in V$ such that
$e_i\, u=0$ for all $i>m$ by a backward induction
on $m$. If $m\geq n$, the claim follows from the previous
paragraph. Let us prove the induction step. Assume
that $0\neq u\in V$ is such that $e_i\, v=0$ for all $i>m$.
If $e_m\, u=0$, the induction step is proved.
Otherwise, for the vector $0\neq u':=e_m\, u$ and for
any $i>m$ we have
\begin{displaymath}
e_i\, u'=e_ie_m\, u=e_me_i\, u+[e_i,e_m]\, u=0 
\end{displaymath}
since $e_i\, u=0$ by the inductive assumption and
$[e_i,e_m]\, u=0$ by the inductive assumption as
$[e_i,e_m]$ equals $e_{i+m}$ up to a nonzero scalar
and $i+m>m$. Now the induction step follows from
the fact that the action of $e_m$ on $V$ is locally
nilpotent.

\subsection{Case of the Heisenberg-Virasoro
algebra}\label{s1.4}

In this subsection, $\mathfrak{g}$ is the 
Heisenberg-Virasoro algebra (cf. \cite{ADKP}) with basis 
$\{e_i, z_i:i\in\mathbb{Z}\}\cup\{c_1,c_2,c_3\}$,
where the $c_i$'s are central and the rest of the 
Lie brackets  is given for $i,j\in\mathbb{Z}$ by
\begin{displaymath}
\begin{array}{rcl}
\left[e_i,e_j\right]&=&(j-i)e_{i+j}+
\delta_{i,-j}\frac{j^3-j}{12}c_1;\\
\left[e_i,z_j\right]&=&jz_{i+j}-
\mathbf{i}j^2\delta_{i,-j}c_2;\\
\left[z_i,z_j\right]&=&j\delta_{i,-j}c_3
\end{array}
\end{displaymath}
(here $\mathbf{i}$ is the imaginary unit).
The algebras $\mathfrak{n}_{\pm}$ are spanned
(over $\mathbb{C}$) by the elements $e_{\pm i}$
and $z_{\pm i}$, where $i\in\mathbb{N}$, respectively.

From the already considered case of the Virasoro
algebra we know that there is a nonzero $v\in V$
such that $e_i\, v=0$ for all $i\in\mathbb{N}$.
If $z_1\, v=0$, then $\mathfrak{n}_+\, v=0$
(as $\mathfrak{n}_+$ is generated by $e_1$, $e_2$
and $z_1$) and we are done. 

If $z_1\, v\neq 0$, then $e_1\, v=0$ implies
(similarly to analogous arguments used several times
above) that $e_1^i\cdot z_1\, v$ equals $z_{i+1}\, v$
up to a nonzero scalar. Since $e_1$ acts locally
nilpotently, it follows that there exists $m\in\mathbb{N}$
such that $z_{i}\, v=0$ for $i>m$. For 
$k\in\mathbb{N}$ let $\mathfrak{n}_i$ denote the
linear span of all $e_i$, $i\in\mathbb{N}$, and all
$z_i$, $i\geq k$. Then $\mathfrak{n}_+=\mathfrak{n}_1$
and each $\mathfrak{n}_i$, $i>1$, is an ideal of 
$\mathfrak{n}_{i-1}$ of codimension one. Since we
already know that $\mathfrak{n}_m\, v=0$, the
proof is completed similarly to the case of finite
dimensional $\mathfrak{g}$, see Subsection~\ref{s1.1}.

\section{Proof of Theorem~\ref{thmmain}\eqref{thmmain.6}}\label{s2}

\subsection{The idea of the proof}\label{s2.1}

By Theorem~\ref{thmmain}\eqref{thmmain.5}, the module $V$ contains
a nonzero element $v$ such that $\mathfrak{n}_+\, v=0$.
Hence there is a short exact sequence
\begin{displaymath}
0\to K\to M\overset{\varphi}{\to} V\to 0,
\end{displaymath}
where $M=U(\mathfrak{g})/U(\mathfrak{g})\mathfrak{n}_+$
is the universal Verma module (see \cite{Zh,Ka}). 
We identify $M$ with $U(\mathfrak{n}_-\oplus\mathfrak{h})
\cong U(\mathfrak{n}_-)\otimes_{\mathbb{C}}U(\mathfrak{h})$
as a $U(\mathfrak{n}_-\oplus\mathfrak{h})$-module
and denote by $\mathbf{v}$ the canonical generator
of $M$. This identification equips $M$ with the
structure of a right $U(\mathfrak{h})$-module,
which commutes with the left $U(\mathfrak{g})$-module
structure by the universal property of Verma modules.
To prove Theorem~\ref{thmmain} it is enough to show that
$V$ is a weight module. The latter is
equivalent to the claim that the ideal 
$I:=K\cap U(\mathfrak{h})$ of $U(\mathfrak{h})$ is maximal.
This is what we are going to prove in the rest of this section.
Note that the ideal $I$ coincides with the annihilator in
$U(\mathfrak{h})$ of the element $\varphi(\mathbf{v})$.

\subsection{The action of $U(\mathfrak{h})$}\label{s2.2}

Let $\Delta$ be the root system of $\mathfrak{g}$ and
$\mathbb{Z}\Delta$ the additive subgroup of $\mathfrak{h}^*$
spanned by $\Delta$.
Fix a basis in each root subspace of $\mathfrak{g}$ 
and a corresponding PBW basis $\{u_i:i\in P\}$ 
of $U(\mathfrak{n}_-)$ (here $P$ is just an indexing set). 
This PBW basis is a basis of $M$ as a 
right (and as a left) $U(\mathfrak{h})$-module.

For $\lambda\in \mathfrak{h}^*$ denote by
$\Phi_{\lambda}:U(\mathfrak{h})\to U(\mathfrak{h})$
the automorphism given by $\Phi_{\lambda}(h)=
h+\lambda(h)$ for all $h\in \mathfrak{h}$.
We have $\Phi_{\lambda}^{-1}=\Phi_{-\lambda}$.
A monomial $u_i$ is said to have weight $\lambda\in \mathbb{Z}\Delta$ 
provided that  $fu_i=u_i\Phi_{\lambda}(f)$ for all 
$f\in U(\mathfrak{h})$.
For $\lambda\in \mathbb{Z}\Delta$ we denote by $P_{\lambda}$
the (finite) set of all indexes $i\in P$ for which
$u_i$ has weight $\lambda$. 

For a $\mathfrak{g}$-module $N$ and an ideal
$J\subset U(\mathfrak{h})$ set 
\begin{displaymath}
N_J:=\{v\in N:J\, v=0\}.
\end{displaymath}
Then, for any $\alpha\in\Delta$ and any nonzero root element
$X_{\alpha}$ we have 
\begin{equation}\label{eq55}
X_{\alpha} N_J\subset N_{\Phi_{\alpha}^{-1}\,J}.
\end{equation}
Denote by $\hat{J}$ the set of all ideals in $U(\mathfrak{h})$
of the form $\Phi_{\lambda}\, J$, $\lambda\in \mathbb{Z}\Delta$.

\begin{lemma}\label{lemn1}
Let $N$ be a $\mathfrak{g}$-module. 
\begin{enumerate}[$($a$)$]
\item\label{lemn1.1} The set 
$\{\mathrm{Ann}_{U(\mathfrak{h})}(v):v\in N,v\neq 0\}$ 
contains an element $J$, maximal with respect to inclusions.
\item\label{lemn1.2} The ideal $J$ above is prime.
\item\label{lemn1.3} If $N$ is simple,
then $N\cong \oplus_{J'\in \hat{J}}N_{J'}$.
\end{enumerate} 
\end{lemma}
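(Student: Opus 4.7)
My plan is to establish the three parts in sequence, relying on the fact that in every case of Theorem~\ref{thmmain} the Cartan $\mathfrak{h}$ is finite-dimensional and abelian, so $U(\mathfrak{h})$ is a polynomial ring in finitely many variables, hence a Noetherian integral domain. For (a), the family $\mathcal{A}:=\{\mathrm{Ann}_{U(\mathfrak{h})}(v):v\in N\setminus\{0\}\}$ consists of proper ideals (since $1\notin\mathrm{Ann}(v)$), and Noetherianity yields a maximal element with respect to inclusion. For (b), I run the standard associated-prime argument: if $J=\mathrm{Ann}(v)$ is maximal in $\mathcal{A}$ and $ab\in J$ with $a\notin J$, then $av\neq 0$; commutativity of $U(\mathfrak{h})$ gives $\mathrm{Ann}(av)\supseteq J$, while $b(av)=(ab)v=0$ yields $b\in\mathrm{Ann}(av)$, so maximality forces $\mathrm{Ann}(av)=J$ and hence $b\in J$.

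For (c), I first verify that $W:=\sum_{J'\in\hat{J}}N_{J'}$ is a $\mathfrak{g}$-submodule of $N$: $U(\mathfrak{h})$ preserves each $N_{J'}$ by commutativity, and \eqref{eq55} combined with the identity $\Phi_\alpha^{-1}(\Phi_\lambda J)=\Phi_{\lambda-\alpha}J\in\hat{J}$ shows $X_\alpha W\subseteq W$ for every root $\alpha$. Since $v\in N_J\subseteq W$, the submodule $W$ is nonzero, and simplicity of $N$ forces $W=N$. To show the sum is direct, I would note that every $J'\in\hat{J}$ is prime, being the image of the prime $J$ under the automorphism $\Phi_\lambda$, and all such $J'$ share the same Krull height, hence are pairwise incomparable in the Noetherian integral domain $U(\mathfrak{h})$. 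Assuming a minimal nontrivial relation $\sum_{J'\in F}w_{J'}=0$ with each $w_{J'}\neq 0$ and $|F|\geq 2$, I fix $J_1'\in F$ and apply prime avoidance to the pairwise incomparable primes in $F$ to pick $f\in\bigcap_{J'\in F\setminus\{J_1'\}}J'$ with $f\notin J_1'$. The relation then collapses to $fw_{J_1'}=0$.

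The main obstacle is to conclude $fw_{J_1'}\neq 0$: this is equivalent to the statement that $\mathrm{Ann}(w)=J'$ for every nonzero $w\in N_{J'}$, i.e., every $J'\in\hat{J}$ with $N_{J'}\neq 0$ is itself maximal in $\mathcal{A}$. I expect to derive this from simplicity of $N$. Since $N=U(\mathfrak{g})w$, one writes $v=xw$ in PBW form and repeatedly applies the commutation rule $fX_\alpha=X_\alpha\Phi_\alpha(f)$ to express the $U(\mathfrak{h})$-action on $xw$ as the action on $w$ twisted by suitable $\Phi_\mu$; the maximality of $J=\mathrm{Ann}(v)$ then transfers to maximality of $\mathrm{Ann}(w)$ within the orbit $\hat{J}$. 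Once this propagation of maximality is in place, $f\notin J_1'=\mathrm{Ann}(w_{J_1'})$ gives $fw_{J_1'}\neq 0$, contradicting the purported relation and completing the direct sum decomposition.
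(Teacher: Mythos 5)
Your parts (a) and (b) coincide with the paper's argument, and your strategy for (c) --- show that $W=\sum_{J'\in\hat J}N_{J'}$ is a nonzero submodule via \eqref{eq55} and hence equals $N$, then prove directness of the sum by picking $f\in\bigcap_{J'\in F\setminus\{J_1'\}}J'$ with $f\notin J_1'$, which exists because the members of $\hat J$ are pairwise incomparable primes of equal height --- is a correct and considerably more detailed rendering of what the paper compresses into one sentence. (Pedantically, the fact you need is ``a prime containing a finite intersection of ideals contains one of them,'' not prime avoidance, but the use is sound.)

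The one step that is not yet a proof is exactly the one you flag as the main obstacle: that $\mathrm{Ann}_{U(\mathfrak{h})}(w)=J'$ for every nonzero $w\in N_{J'}$. Your sketch via $v=xw$ and the commutation rule works as stated only when $x$ is a single weight monomial; in general $x=\sum_\mu x_\mu$ with $x_\mu$ of weight $\mu$, and \eqref{eq55} only yields $x_\mu w\in N_{\Phi_\mu^{-1}(\mathrm{Ann}(w))}$ for each $\mu$, hence $J=\mathrm{Ann}(v)\supseteq\bigcap_\mu\Phi_\mu^{-1}(\mathrm{Ann}(w))$ --- an intersection, not a single twist, so ``maximality transfers'' does not follow directly. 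The repair uses tools you already have: since $J$ is prime (part (b)) and contains this finite intersection, it contains $\Phi_{\mu_0}^{-1}(\mathrm{Ann}(w))$ for a single $\mu_0$, and therefore $J\supseteq\Phi_{\mu_0}^{-1}(\mathrm{Ann}(w))\supseteq\Phi_{\mu_0}^{-1}(J')=\Phi_{\lambda-\mu_0}(J)$. As $J$ and $\Phi_{\lambda-\mu_0}(J)$ are primes of the same height, all inclusions are equalities, so $\mathrm{Ann}(w)=\Phi_{\mu_0}(J)\in\hat J$; comparing with $\mathrm{Ann}(w)\supseteq J'$ and using incomparability within $\hat J$ once more gives $\mathrm{Ann}(w)=J'$. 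With this paragraph inserted your proof is complete; note that the paper itself asserts this maximality claim with essentially no justification, so your version, once repaired, is more explicit than the published one.
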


\begin{proof}
Claim \eqref{lemn1.1} follows from the fact that $U(\mathfrak{h})$ 
is noetherian. Let $v\in N$ be such that 
$\mathrm{Ann}_{U(\mathfrak{h})}(v)=J$. Assume that $J$ is not prime 
and let $x,y\in U(\mathfrak{h})\setminus J$ be such that $xy\in J$.
Then $w:=y\, v$ is nonzero as $y\not\in J$, moreover, $\mathrm{Ann}_{U(\mathfrak{h})}(w)\supset J$ as $U(\mathfrak{h})$ is 
commutative, and further $\mathrm{Ann}_{U(\mathfrak{h})}(w)$ also 
contains $x\not\in J$. This contradicts maximality of $J$, which 
implies claim \eqref{lemn1.2}. 

If $N$ is simple, it is generated by any nonzero element.
Therefore, as all $\Phi_{\lambda}$, $\lambda\in \mathbb{Z}\Delta$, are 
automorphisms, it follows that all corresponding ideals $\Phi_{\lambda}\, J$
are maximal in $\{\mathrm{Ann}_{U(\mathfrak{h})}(v):v\in N\}$.
Now claim \eqref{lemn1.3} follows from \eqref{eq55}.
\end{proof}

\subsection{The action of the center}\label{s2.3}

For $u=\sum_i u_if_i\in M$ the set of all 
$\lambda\in \mathfrak{h}^*$ for which there exists 
$i\in P_{\lambda}$ such that $f_i\neq 0$ is called
the {\em support} of $u$ and denoted $\mathrm{supp}(u)$.
The element $u$ is called {\em homogeneous} 
if  $|\mathrm{supp}(u)|\leq 1$. 

Recall the
following classical result (see for example 
\cite[Proof of Proposition~3.1]{Mc}, 
or \cite[Theorem~4.2.1(i)]{Ma2} for full details,
see also \cite[2.6.5]{Di} for an alternative argument
in the case of finite dimensional Lie algebras):

\begin{lemma}\label{lemm}
Any endomorphism of a simple module over a countably 
generated associative $\mathbb{C}$-algebra is scalar.
\end{lemma}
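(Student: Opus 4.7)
The plan is to prove this via the classical Dixmier/Quillen cardinality argument. Let $A$ be a countably generated associative $\mathbb{C}$-algebra, $N$ a simple $A$-module, and $\phi\in\mathrm{End}_A(N)$. First, I would invoke Schur's lemma: since $N$ is simple, $D:=\mathrm{End}_A(N)$ is a division algebra over $\mathbb{C}$. The goal is to show $D=\mathbb{C}$, and suppose for contradiction that some $\phi\in D$ is not scalar.

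Next, I would estimate the $\mathbb{C}$-dimension of $D$. Being countably generated means $A$ has an at-most-countable generating set, hence $\dim_{\mathbb{C}}A\leq\aleph_0$ (the $\mathbb{C}$-span of all words in the generators is countable). Pick any nonzero $m\in N$; by simplicity, $N=Am$, so the surjection $A\to N$, $a\mapsto am$, forces $\dim_{\mathbb{C}}N\leq\aleph_0$. Now $D$ acts faithfully on $N$, and the evaluation map $D\to N$, $\psi\mapsto\psi(m)$, is injective (if $\psi(m)=0$ then $\psi$ vanishes on $Am=N$), so $\dim_{\mathbb{C}}D\leq\dim_{\mathbb{C}}N\leq\aleph_0$.

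Then comes the crux: if $\phi\in D$ is not scalar, I claim $\phi$ is transcendental over $\mathbb{C}$. Indeed, if $\phi$ satisfied a polynomial equation, then $\mathbb{C}[\phi]$ would be a finite-dimensional commutative $\mathbb{C}$-subalgebra of the division algebra $D$, hence a finite field extension of the algebraically closed field $\mathbb{C}$, forcing $\phi\in\mathbb{C}$. So $\mathbb{C}[\phi]$ is a polynomial ring and, since $D$ is a division algebra, $D$ contains the field of fractions $\mathbb{C}(\phi)$. But the family $\bigl\{(\phi-c)^{-1}:c\in\mathbb{C}\bigr\}$ is $\mathbb{C}$-linearly independent inside $\mathbb{C}(\phi)$ (this is the standard partial-fractions fact that distinct simple poles are independent), so $\dim_{\mathbb{C}}\mathbb{C}(\phi)\geq|\mathbb{C}|>\aleph_0$. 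This contradicts the estimate $\dim_{\mathbb{C}}D\leq\aleph_0$ from the previous step.

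The only genuine obstacle is the middle step: ensuring that the uncountability of $\mathbb{C}$ actually enters the argument through an explicit $\mathbb{C}$-linearly independent uncountable family inside $\mathbb{C}(\phi)$, and that the transcendence of $\phi$ is forced by $\mathbb{C}$ being algebraically closed (so this proof is genuinely special to the ground field $\mathbb{C}$, or more generally to any uncountable algebraically closed field). Everything else is bookkeeping about cardinality of spans of countable generating sets.
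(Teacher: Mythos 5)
Your argument is correct and is precisely the classical Dixmier--Schur cardinality argument that the paper itself does not reprove but cites (McConnell, \cite[Proof of Proposition~3.1]{Mc}, and \cite[Theorem~4.2.1(i)]{Ma2}): bound $\dim_{\mathbb{C}}\mathrm{End}_A(N)$ by $\aleph_0$ via evaluation at a generator, then derive a contradiction from the uncountable linearly independent family $(\phi-c)^{-1}$, $c\in\mathbb{C}$, if $\phi$ were non-scalar. So this is essentially the same approach as the proof the paper relies on, and no gaps are present.
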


Denote by $\mathfrak{z}\subset \mathfrak{h}$ the center
of $\mathfrak{g}$. Definition of $\mathfrak{g}$ equips
$\mathfrak{z}$ with a standard basis (see \cite{MP,ADKP}).
As multiplication with central elements always define
endomorphisms of a module, from Lemma~\ref{lemm} it
follows that $I\cap U(\mathfrak{z})$ is a maximal
ideal in $U(\mathfrak{z})$.

Note that for any homogeneous element $u\in M$ we have 
\begin{equation}\label{eq3}
U(\mathfrak{h})u=uU(\mathfrak{h}).
\end{equation}
Our aim is to show that the ideal $K$ is generated 
by homogeneous elements. If we could show this, then 
\eqref{eq3} would imply that $K$ is stable under the 
right multiplication with $U(\mathfrak{h})$ and hence 
the latter must induce endomorphisms of $V$. Now 
Lemma~\ref{lemm} would imply that $I$ must be maximal 
and we would be done.

\subsection{Reduction to Verma modules}\label{s2.4}

By a similar argument as in the proof of  
Lemma~\ref{lemn1}\eqref{lemn1.2}  we may assume that $I$ is prime. 
Assume that $K$ is not generated by homogeneous elements and 
let $K_h$ be the submodule of $K$ generated by all
homogeneous elements (note that 
$K_h\supset U(\mathfrak{g})I$). Let 
$u\in K\setminus K_h$ be an element such that 
$\mathrm{supp}(u)$ has the minimal possible cardinality 
(note that $|\mathrm{supp}(u)|>1$). Then
$u=\sum_i u_if_i$ and for every $\lambda\in\mathrm{supp}(u)$
the element $u_{\lambda}:=\sum_{i\in P_{\lambda}} u_if_i\not\in K_h$
by the minimality of $\mathrm{supp}(u)$. Without loss of generality we may
also assume that $f_i\not\in I$ whenever $f_i\neq 0$.

As usual, for $\mu,\nu\in \mathfrak{h}^*$ we write
$\mu\preceq\nu$ if $\nu-\mu$ can be written as a
linear combination of positive roots with nonnegative
integer coefficients. Assume additionally
that $\mathrm{supp}(u)$ contains a $\lambda$ which is
maximal (with respect to $\preceq$) in the set of 
all possible elements of the support for all possible
elements from $K\setminus K_h$ with support of the 
minimal possible cardinality. Let $\mu\neq\lambda$ be another 
element of  $\mathrm{supp}(u)$. By the maximality of $\lambda$,
for every positive root $\alpha$ and for any root
element $X_{\alpha}$ in $\mathfrak{g}_{\alpha}$
we get  $X_{\alpha}u\in K_h$, in particular,
$X_{\alpha}u_{\lambda}\in K_h$ for every $\lambda\in\mathrm{supp}(u)$. 

We have $\varphi(\mathbf{v})\in V_{I}$ by our assumption.
The element $u_{\lambda}$ is homogeneous and does not belong to 
$K_h$. Hence it does not belong to $K$ either and thus
$u_{\lambda}\varphi(\mathbf{v})\neq 0$, which implies that
$u_{\lambda}\varphi(\mathbf{v})\in V_{\Phi_{\lambda}^{-1}(I)}$
by \eqref{eq55}.
On the other hand, $u\varphi(\mathbf{v})=0$, which implies that 
\begin{displaymath}
u_{\lambda}\varphi(\mathbf{v})=-\sum_{\nu\in\mathrm{supp}(u)
\setminus\{\lambda\}} u_{\nu}\varphi(\mathbf{v}).
\end{displaymath}
The annihilator in $U(\mathfrak{h})$ of the element on the right hand 
side equals 
\begin{displaymath}
I':=\bigcap_{\nu\in\mathrm{supp}(u)
\setminus\{\lambda\}}\Phi_{\nu}^{-1}(I).
\end{displaymath}
Therefore $\Phi_{\lambda}^{-1}(I)=I'$ is a prime ideal.
Since all $\Phi_{\xi}$ are automorphisms, the ideals 
$\Phi_{\lambda}^{-1}(I)$ and $\Phi_{\nu}^{-1}(I)$ have the same
height and the same depth, which implies 
$\Phi_{\lambda}^{-1}(I)=\Phi_{\nu}^{-1}(I)$
for all $\nu\in\mathrm{supp}(u)
\setminus\{\lambda\}$. In particular, it follows that
the ideal $I$ is invariant under $\Phi_{\lambda-\mu}$
for $\lambda\neq \mu$ as fixed above.

Let $\mathbf{m}$ be any maximal ideal of $U(\mathfrak{h})$
containing $I$. Assume that $\mathbf{m}$ is given by
$\lambda_{\mathbf{m}}\in \mathfrak{h}^*$.
Then $U(\mathfrak{g})\mathbf{m}\supset U(\mathfrak{g})I$.
Consider the Verma module 
$M(\lambda_{\mathbf{m}}):=M/U(\mathfrak{g})\mathbf{m}$
with  highest weight $\lambda_{\mathbf{m}}$.
Then $K_h+ U(\mathfrak{g})\mathbf{m}$ is a submodule
of $M(\lambda_{\mathbf{m}})$ and the intersection of this
submodule with $U(\mathfrak{h})$ equals $\mathbf{m}$.
Hence the corresponding quotient $Q$ is nonzero.
We may even choose $\mathbf{m}$ such that the images of 
the nonzero $f_i$'s in both $u_{\lambda}=\sum_{i\in P_{\lambda}} u_if_i$ and 
$u_{\mu}=\sum_{i\in P_{\mu}} u_if_i$ are nonzero, which yields that the
images of both $u_{\lambda}$ and $u_{\mu}$ in
$Q$ are nonzero. Then
$Q$ contains a nonzero primitive vector of weight 
$\lambda_{\mathbf{m}}+\lambda$ and a nonzero primitive 
vector of weight  $\lambda_{\mathbf{m}}+\mu$.  
By \cite[2.11]{MP}, existence of
a primitive vector  in $Q$ implies existence of a primitive vector
in $M(\lambda_{\mathbf{m}})$ of the same weight.

\subsection{Completion of the proof}\label{s2.5}

As we have seen above, the ideal 
$I$ is invariant under the action of $\Phi_{\lambda-\mu}$.
This implies that the ideal $I$ is generated by its
intersection with $U(\mathfrak{h}')$, where
$\mathfrak{h}'$ consists of all $h\in \mathfrak{h}$
such that $(\lambda-\mu)(h)=0$. Indeed, 
let $\overline{h}$ be a nonzero element from the complement
of $\mathfrak{h}'$ in $\mathfrak{h}$, which can be
written as an integral linear combination of coroots. If
$f\in I$, then $f-\Phi_{\lambda-\mu}(f)\in I$
and the latter has a strictly smaller degree with 
respect to $\overline{h}$.

The last paragraph means that, when choosing $\mathbf{m}$
above, we are free to choose any eigenvalue of
$\overline{h}$ (the only requirement is that
it should not kill the images of the $u_{\lambda}$
and $u_{\mu}$ in $Q$, but this restriction
means that a finite number of nonzero polynomials in
$\overline{h}$ should not vanish). 
In particular, we can choose this eigenvalue to
be a complex number, which is transcendental over 
the finite extension of $\mathbb{Q}$ given by
adjoining eigenvalues of our fixed (finite) basis in 
$\mathfrak{z}$. Then the  usual structure theory of 
Verma modules (see \cite{ADKP,KK,MP}) says that 
reducibility and submodules of Verma modules are 
controlled by vanishing of the Shapovalov form (see 
\cite{Sh,ADKP,KK} or \cite[2.8]{MP}), whose determinant is given 
in  terms of certain polynomials over $\mathfrak{h}$
with rational coefficients. This means that in the
case the eigenvalue of $\overline{h}$ is as chosen above, it is 
not possible  for a Verma module to have nonzero eigenvectors 
of weights $\lambda_{\mathbf{m}}+\lambda$ and 
$\lambda_{\mathbf{m}}+\mu$ at the same time.
The obtained contradiction completes the proof.

\section{Some (counter)examples}\label{s3}

\subsection{Higher rank Virasoro algebras}\label{s3.1}

Let $G\subset \mathbb{R}$ be a nontrivial
additive subgroup, which is not isomorphic to 
$\mathbb{Z}$, and $\mathrm{Vir}_G$ be the
correspondent higher rank Virasoro algebra as 
in \cite{Za,PZ}. It has basis 
$\{e_i:i\in G\}\cup\{c\}$, where $c$ is central 
and the rest of the Lie brackets  is given by \eqref{eq1}.
The subalgebra $\mathfrak{n}_{+}$ and
$\mathfrak{n}_{-}$ are spanned by $e_i$ with
$i>0$ and $i<0$, respectively. Let $M(0)$ be the Verma
module over $\mathrm{Vir}_G$ whose simple top $L(0)$
is the trivial $\mathrm{Vir}_G$-module. Denote by
$K(0)$ the kernel of the canonical 
epimorphism $M(0)\to L(0)$
(see \cite{Ma}). In \cite{HWZ} it was shown that
$K(0)$ is simple. Clearly, every root vector of
$\mathfrak{n}_+$ acts locally nilpotently on $K(0)$.
At the same time $K(0)$ does not have any highest weight.
Indeed, its support coincides with the set of all
negative elements of $G$. As $G\not\cong\mathbb{Z}$,
$G$ contains negative elements of arbitrarily small
absolute value. This means that Theorem~\ref{thmmain}
does not hold for $\mathrm{Vir}_G$.

\subsection{Heisenberg Lie algebra}\label{s3.3}

The Heisenberg Lie algebra $\mathfrak{H}$ has basis
$\{z_i:i\in\mathbb{Z}\}$ and the Lie bracket is given by
\begin{displaymath}
[z_i,z_j]=j\delta_{i,-j}z_0.
\end{displaymath}
The subalgebras $\mathfrak{n}_{+}$ and
$\mathfrak{n}_{-}$ are spanned by $z_i$ with
$i>0$ and $i<0$, respectively.

Consider the set 
\begin{displaymath}
I=\{\varepsilon=(\varepsilon_1,\varepsilon_2,\dots):
\varepsilon_i\in\mathbb{N};
\varepsilon_i=2\text{ for all }i\gg 0\}
\end{displaymath}
and let $V$ have basis $\{v_{\varepsilon}:\varepsilon\in I\}$.
For $i\in\mathbb{Z}$ and $\varepsilon\in I$ set
\begin{equation}\label{eq2}
z_i\, v_{\varepsilon}=
\begin{cases}
v_{\varepsilon}, & i=0;\\
0, & i>0\text{ and }\varepsilon_i=1;\\
v_{(\varepsilon_1,\dots,\varepsilon_{i-1},\varepsilon_i-1,
\varepsilon_{i+1},\varepsilon_{i+2},\dots)}, 
& i>0\text{ and }\varepsilon_i>1;\\
i\varepsilon_{-i}
v_{(\varepsilon_1,\dots,\varepsilon_{i-1},\varepsilon_i+1,
\varepsilon_{i+1},\varepsilon_{i+2},\dots)}, 
& i<0.
\end{cases}
\end{equation}
It is easy to check that this
makes $V$ into an $\mathfrak{H}$-module.
Further $z_i^{\varepsilon_i}\, v_{\varepsilon}=0$, $i>0$,
and hence such $z_i$ acts on $V$ locally nilpotently.

We claim that $V$ is simple. Assume that this is not 
the case and let $W\subset V$ be a proper nonzero 
submodule. Let $k\in\mathbb{N}$ be minimal such that 
$W$ contains a nontrivial linear combination of basis
elements with exactly $k$ nonzero summands. Let
$u\in V$ be such a combination. Note that
$V$ is clearly generated by any $v_{\varepsilon}$
and hence $k>1$. Then there
exists $\varepsilon,\varepsilon'\in I$
and $i\in\mathbb{N}$ such that 
$\varepsilon_i<\varepsilon'_i$ and  both
$v_{\varepsilon}$ and $v_{\varepsilon'}$ appear in
$u$ with nonzero coefficients. This implies that
$z_i^{\varepsilon_i}\, u\in W$, on the one hand, 
is nonzero, but, on the other hand, contains less
then $k$ nonzero summands, a contradiction.

At the same time, every nonzero vector of $V$ generates 
an infinite-dimensional $\mathfrak{n}_+$-submodule 
(applying $z_i$ for $i\gg 0$). Hence Theorem~\ref{thmmain} 
does not hold for $\mathfrak{H}$.

\begin{remark}\label{rem}
{\em 
The construction above admits a straightforward
generalization: for $\mathbf{e}\in\mathbb{N}^\mathbb{N}$
consider the set 
\begin{displaymath}
I_{\mathbf{e}}=\{\varepsilon=
(\varepsilon_1,\varepsilon_2,\dots):
\varepsilon_i\in\mathbb{N};
\varepsilon_i=\mathbf{e}_i\text{ for all }i\gg 0\}
\end{displaymath}
and define the $\mathfrak{H}$-module structure on 
the vector spaces $V_{\mathbf{e}}$ with basis 
$\{v_{\varepsilon}:\varepsilon\in I_{\mathbf{e}}\}$
using
\eqref{eq2}. Similarly to the above one shows that
the module $V_{\mathbf{e}}$ is simple. These modules
generalize simple $\mathfrak{H}$-modules constructed in
\cite[Section~6]{BBF}.
}
\end{remark}

A couple of months after the original version of this paper was
published on the arxive, similar modules appeared in a more
general context in \cite{BBFK}.

\subsection{Relaxing the conditions}\label{s3.4}

One might wonder whether conditions of Theorem~\ref{thmmain}
could be relaxed in some natural way (for example,
by requiring the local nilpotency only for the
generators of $\mathfrak{n}_+$). For instance,
in the case of affine Lie algebras there are two types
of roots, real and imaginary, and the algebra
$\mathfrak{n}_+$ is generated by the real roots.
It is therefore tempting to ask whether it would be
enough in Theorem~\ref{thmmain} to require that 
the action of all real root vectors is locally nilpotent. 
Unfortunately, this would affect the statement as the
adjoint representation gives rise to a simple
non-highest weight module on which all real root vectors
acts locally nilpotently. We do not know which class
of simple modules is described by the conditions relaxed
in this way.
\vspace{1cm}

\begin{center}
\bf Acknowledgments
\end{center}

The research was done during the visit of the first 
author to Wilfrid Laurier University in April 2011. 
The hospitality and financial support of Wilfrid 
Laurier University are gratefully acknowledged. 
The first author was partially supported by the 
Swedish Research Council. The second author was 
partially supported by NSERC. We thank the referee
for helpful comments and suggestions.

\vspace{1cm}

\noindent
V.M.: Department of Mathematics, Uppsala University, 
Box 480, SE-751 06, Uppsala, Sweden; e-mail: {\tt mazor\symbol{64}math.uu.se}
\vspace{0.5cm}

\noindent K.Z.: Department of Mathematics, Wilfrid Laurier
University, Waterloo, Ontario, N2L 3C5, Canada; and College of Mathematics and 
Information Science, Hebei Normal (Teachers) University, Shijiazhuang 050016, 
Hebei, P. R. China. e-mail:  {\tt kzhao\symbol{64}wlu.ca}

\end{document}